\newcommand{\dataversione}{October 20, 2017}
\numberwithin{equation}{section}
\newtheoremstyle{mytheorem}
{10pt}
{10pt}
{\it}
{\parindent}
{\bf}
{}
{ }
{\thmname{#1}\thmnumber{~#2.}\thmnote{~\rm#3}}
\newtheoremstyle{myremark}
{10pt}
{10pt}
{\rm}
{\parindent}
{\bf}
{}
{ }
{\thmname{#1}\thmnumber{~#2.}\thmnote{~\rm#3}}
\newtheoremstyle{myparagraph}
{10pt}
{10pt}
{\rm}
{\parindent}
{\bf}
{.}
{ }
{\thmnumber{#2.~}\thmname{#1}\thmnote{#3}}
\theoremstyle{mytheorem}
\newtheorem{theorem}[subsection]{Theorem}
\newtheorem{lemma}[subsection]{Lemma}
\newtheorem{proposition}[subsection]{Proposition}
\theoremstyle{myremark}
\newtheorem{remark}[subsection]{Remark}
\theoremstyle{myparagraph}
\newtheorem{parag}[subsection]{}
\newtheorem*{parag*}{}
\def\@secnumfont{\sc}
\def\section{\@startsection%
{section}
{1}
\z@{1.5\linespacing\@plus .2\linespacing}
  {.7\linespacing}
  {\normalfont\sc\centering}}
\renewenvironment{proof}[1][\proofname]{\par 
  \pushQED{\qed}%
  \normalfont \topsep10\p@\@plus6\p@\relax 
  \trivlist 
  \itemindent\normalparindent 
  \item[\hskip\labelsep 
    \bfseries 
    #1\@addpunct{.}]\ignorespaces 
}{%
  \popQED\endtrivlist\@endpefalse 
} 
\providecommand{\proofname}{Proof}
\newcommand{\footnoteb}[1]{\footnote{~#1}}
\newenvironment{itemizeb}
{\begin{itemize}\itemsep=3 pt\leftskip=0 pt\labelsep=5 pt}
{\end{itemize}}
\newcommand{\R}{\mathbb{R}}
\newcommand{\HH}{\mathbb{H}}
\newcommand{\Mass}{\mathbb{M}}
\newcommand{\Haus}{\mathscr{H}}
\newcommand{\Leb}{\mathscr{L}}
\newcommand{\Tan}{\mathrm{Tan}}
\newcommand{\Span}{\mathrm{span}}
\newcommand{\bd}{\partial}
\newcommand{\de}{\mathrm{d}}
\newcommand{\miniskip}{\vskip 3 pt}
\newcommand{\IC}[1]{\llbracket#1\rrbracket}
\newcommand{\scalar}[1]{\langle#1\rangle}
\newcommand{\bigscalar}[1]{\big\langle#1\big\rangle}
\begin{document}


\thispagestyle{empty}

~\vskip -1.1 cm
	%
	%
{\footnotesize\noindent 
[version: \dataversione]
\hfill \textit{Rend.\ Lincei Mat.\ Appl.} 28 (2017), no.~4, 861-869
\par
\hfill DOI~\href{http://dx.doi.org/10.4171/RLM/788}{10.4171/RLM/788} \par
}

\vspace{1.7 cm}

	%
	%
{\centering\Large\bf
On some geometric properties of currents
\vskip 4 pt
and Frobenius theorem
\\
}

\vspace{.8 cm}

	%
	%
{\centering\sc Giovanni Alberti, Annalisa Massaccesi\\}

\vspace{.8 cm}

{\rightskip 1 cm
\leftskip 1 cm
\parindent 0 pt
\footnotesize
	%
	%
{\sc Abstract.}
In this note we announce some results,
due to appear in \cite{AlMas}, \cite{AlMasSte},
on the structure of integral and normal 
currents, and their relation to
Frobenius theorem.
In particular we show
that an integral current cannot be tangent
to a distribution of planes which is nowhere
involutive (Theorem~\ref{s-frobenius1}), 
and that a normal current which is tangent 
to an involutive distribution of planes
can be locally foliated in terms
of integral currents (Theorem~\ref{s-frobenius2}).
This statement gives a partial
answer to a question raised by Frank Morgan
in \cite{GMT}.

\medskip
{\sc Keywords:} 
non-involutive distributions, 
Frobenius theorem,
Sobolev surfaces,
integral currents, 
normal currents,
foliations, 
decomposition of normal currents.

\medskip
{\sc MSC (2010):} 
58A30, 49Q15, 58A25, 53C17, 46E35.
\par
}

\section{Introduction}
\label{s1}
Consider a distribution of $k$-dimensional planes
in $\R^n$, namely a map that associates to each point 
$x\in\R^n$ a $k$-dimensional subspace $V(x)$ of $\R^n$, 
and assume that $V$ is spanned by vectorfields
$v_1,\dots,v_k$ of class $C^1$.
We say that $V$ is \emph{involutive}
at a point $x\in\R^n$ if the commutators 
of the vectorfields $v_1,\dots,v_k$, 
evaluated at $x$, belong to $V(x)$ (see~\S\ref{s-vectorfields}).
Moreover, given a $k$-dimensional surface $S$ in $\R^n$, 
we say that $S$ is \emph{tangent} to $V$ 
if the tangent space $\Tan(S,x)$ agrees with $V(x)$ 
for every $x\in S$.

In this context, 
the first part of Frobenius theorem
states that, if $S$ is tangent to $V$, then $V$ 
must be involutive at every point of $S$.
Or, in a slightly weaker form, that if $V$ is nowhere involutive
then there exist no tangent surfaces (cf.~\cite{Lee}, Theorem~14.5).

The classical version of this theorem requires 
that the surface $S$ is at least of class $C^1$, and 
it is then natural to ask if similar 
statements hold for weaker notions of surface. 
To this regard, we mention that a positive answer 
for \emph{Sobolev surfaces}, that is, 
Sobolev images of open subsets of $\R^{2h}$,  
has been given in \cite{MaMaMo}, Theorem~1.2, 
when $V$ is the distribution of $2h$-planes
in $\R^{n=2h+1}$ corresponding to the horizontal
distribution in the sub-Riemannian Heisenberg group $\HH^h$.

In section~\ref{s3} we give a positive answer for 
\emph{integral currents},%
\footnoteb{The basic definitions and terminology concerning currents 
are recalled in Section~\ref{s2}.}
and more precisely we show that
given an integral $k$-dimensional current $T$
which is tangent to $V$, 
then $V$ must be involutive on the support of $T$
(Theorem~\ref{s-frobenius1}).
Note that the assumption that $T$ is integral is crucial, 
and indeed the analogous statement for rectifiable sets 
does not hold, cf.~Remark~\ref{s-frobenius1rem}(a).

It turns out that Theorem~\ref{s-frobenius1} 
is an immediate consequence 
of the following geometric property of the boundary of 
integral currents, which is actually the heart
of the matter: 
if $T$ is an integral $k$-dimensional current tangent to a 
continuous distribution of $k$-planes $V$, 
then $\bd T$ is tangent to $V$ as well
(see \S\ref{s-tangent} for the definition of tangency, 
and Theorems~\ref{s-geopro1} and \ref{s-geopro2}).

\miniskip
In Section~\ref{s4} we turn to the other part of Frobenius theorem, 
which states that if $V$ is everywhere involutive, then $\R^n$ can be locally
foliated with $k$-dimensional surfaces which are tangent 
to $V$. 
In Theorem~\ref{s-frobenius2} we prove the 
following generalization:
if $V$ is everywhere involutive and $T$ is a $k$-dimensional 
\emph{normal current} tangent to $V$, then $T$ can be locally foliated 
by a family $k$-dimensional integral currents tangent to $V$
(the definition of foliation, or mass decomposition, 
of a current is given in \S\ref{s-foliation}).
Conversely, if $T$ can be foliated then $V$ must be 
involutive at every point in the support of $T$.

The first part of Theorem~\ref{s-frobenius2} gives a partial 
positive answer to a question raised by Frank Morgan 
in \cite{GMT}, namely if every normal current admits a foliation
in terms of integral currents
(other positive results were given in \cite{HP}, \cite{PaoSte}, 
\cite{Smirnov}, see Remark~\ref{s-frobenius2rem}). 
On the other hand, the second part shows that
a normal current which is tangent to a nowhere involutive 
distribution of planes admits no foliation 
of a certain type: this result was first stated in 
\cite{Zworski}, but in a form which is not correct
(see Remark~\ref{s-frobenius2rem}(c) for more details).

\section{Notation}
\label{s2}
In this section we briefly recall some notation and basic definitions.
For rectifiable sets and currents we essentially follow \cite{KP}. 
As usual, $\Haus^k$ stands for the $k$-dimensional Hausdorff measure
and $\Leb^n$ for the Lebesgue measure on $\R^n$.

\medskip
In the following we fix an open set $\Omega$ in $\R^n$.

\begin{parag}[The vectorfield $\boldsymbol{v}$
and the distribution of planes $\boldsymbol{V}$]
\label{s-vectorfields}
In the following we consider 
$v_1,\dots,v_k$ continuous vectorfields on $\Omega$
with $0<k<n$, and the simple $k$-vectorfield
\[
v := v_1\wedge\dots\wedge v_k
\, .
\]
Moreover we assume that $v$ is unitary, 
that is, $|v(x)|=1$ for every $x\in\Omega$, 
and we denote by $V$ the distribution of $k$-planes 
spanned by $v$, that is, 
\[
V(x):= \Span(v(x)) := \Span\big\{v_1(x),\dots,v_k(x)\big\}
\quad\text{for every $x\in\Omega$.}
\]
With a slight abuse of language, 
we say that $V$ (or $v$) is of class $C^1$ to mean that 
$v_1,\dots,v_k$ are of class $C^1$,
and if this is the case we say that $V$ is \emph{involutive} at 
a point $x\in\Omega$ if
\[
[v_i,v_j](x) \in V(x)
\quad\text{for every $1\le i,j\le k$,}
\]
where $[v_i,v_j]$ is the Lie bracket, or commutator, of $v_i$ and $v_j$.%
\footnoteb{That is, the vectorfield defined by
\[
[v_i,v_j](x)
:= \bigscalar{\nabla v_j(x); v_i(x)} 
   - \bigscalar{\nabla v_i(x); v_j(x)} 
 = \frac{\bd v_j}{\bd v_i}(x) 
   - \frac{\bd v_i}{\bd v_j}(x)
\, ,
\]
where $\scalar{{~;~}}$ denotes the usual pairing of matrices and vectors.}
\end{parag}

\begin{parag}[Rectifiable sets, orientation]
\label{s-rectifiable} 
A set $\Sigma$ in $\Omega$ is \emph{rectifiable} of 
dimension $k$, or $k$-rectifiable,
if it has finite $\Haus^k$ measure and can be covered, 
except for an $\Haus^k$-null subset, by countably many 
surfaces of dimension $k$ and class $C^1$.%
\,\footnoteb{Through this paper 
sets, maps and vectorfields are always 
(at least) Borel measurable.}

Then at $\Haus^k$-a.e.~$x\in\Sigma$ there exists 
an \emph{approximate tangent space} $\Tan(\Sigma,x)$, 
which is characterized (for $\Haus^k$-a.e.~$x\in\Sigma$) 
by the following property:
for every $k$-surface $S$ of class $C^1$ there holds
\[
\Tan(\Sigma,x)=\Tan(S,x)
\quad\text{for $\Haus^k$-a.e.~$x\in\Sigma\cap S$.}
\]
An \emph{orientation} of $\Sigma$ is a simple $k$-vectorfield 
$\tau$ defined on $\Sigma$ such that $\tau(x)$ spans 
$\Tan(\Sigma,x)$ and has norm $1$ for 
$\Haus^k$-a.e.~$x\in\Sigma$.
\end{parag}

\begin{parag}[Currents, boundary, mass, normal currents]
\label{s-currents} 
A $k$-dimensional current, or $k$-current, in $\Omega$ is a (continuous) 
linear functional on the space of smooth $k$-forms with compact 
support on $\Omega$. The \emph{boundary} of a $k$-current $T$ is the 
$(k-1)$-current $\bd T$ defined by 
$\scalar{\bd T;\omega} := \scalar{T;\de\omega}$,
where $\de\omega$ is the exterior differential
of the form $\omega$.

The \emph{mass} of $T$, denoted by $\Mass(T)$, 
is the supremum of $\scalar{T;\omega}$ over all forms $\omega$
such that $|\omega(x)| \le 1$ for every~$x$.
A current $T$ with finite mass can be represented as a 
vector measure, that is, there exist a positive finite measure 
$\mu$ on $\Omega$ and a map $\tau$ from $\Omega$ to the set of
$k$-vectors with norm $1$, called \emph{orientation}, such that
\[
\scalar{T;\omega} 
:=\int_{\Omega} \bigscalar{\tau(x);\omega(x)} \, d\mu(x)
\, ,
\]
where $\scalar{{~;~}}$ is the usual pairing of 
$k$-vectors and $k$-covectors.
In this case we simply write $T=\tau\mu$.
Note that the mass of $T$ is $\Mass(T)=\mu(\Omega)=\|\mu\|$.

A current $T$ is called \emph{normal} 
if both $T$ and $\bd T$ have finite mass.
\end{parag}

\begin{parag}[Rectifiable and integral currents]
\label{s-intcurrents} 
A $k$-current $T$ is called 
\emph{rectifiable} (with integral multiplicity)
if there exist a $k$-rectifiable set $\Sigma$, 
an orientation $\tau$ of $\Sigma$, 
and a \emph{positive}, integer-valued multiplicity 
$\theta\in L^1(\Sigma,\Haus^k)$ such that
\[
\scalar{T;\omega} 
:=\int_\Sigma \bigscalar{\tau(x);\omega(x)} \, \theta(x) \, d\Haus^k(x)
\, .
\]
In this case we write $T=\IC{\Sigma,\tau,\theta}$. 
Note that the mass of $T$ agrees with the $k$-dimensional 
measure of $\Sigma$ counted with multiplicity, that is, 
$\Mass(T)=\int_\Sigma \theta \, d\Haus^k$;
in particular $\Mass(T)$ is finite.

A current $T$ is called \emph{integral} if both $T$ and $\bd T$ 
are rectifiable; in particular every integral current is normal.
\end{parag}

\begin{parag}[Notions of tangency]
\label{s-tangent} 
Take $v$ and $V$ as in \S\ref{s-vectorfields}
We say that an $h$-rectifiable set $\Sigma$ with $h\le k$
is tangent to $V$ if the tangent space $\Tan(\Sigma,x)$
is contained in $V(x)$ for $\Haus^h$-a.e.~$x\in\Sigma$.

Accordingly, a rectifiable 
$h$-current $T=\IC{\Sigma,\tau,\theta}$
is tangent to $V$ if the supporting rectifiable 
set $\Sigma$ is so.
More generally, an $h$-current with finite mass 
$T=\tau\mu$ is tangent to $V$ if the span of 
the $h$-vector $\tau(x)$ is contained in $V(x)$ 
for $\mu$-a.e.~$x$.%
\footnoteb{The span of a $h$-vector $w$ in $\R^n$ is defined 
as the smallest subspace $W$ of $\R^n$ such that $w$ is also 
a $h$-vector in $W$. If $w$ is a simple vector we 
recover the usual definition.}

Moreover we say that a rectifiable $k$-current 
$T=\IC{\Sigma,\tau,\theta}$
is \emph{oriented} by $v$ if $\tau(x)=v(x)$ 
for $\Haus^k$-a.e.~$x\in\Sigma$, and more
generally, a $k$-current with finite mass 
$T=\tau\mu$ is oriented by $v$ if $\tau(x)=v(x)$
for $\mu$-a.e.~$x$.
\end{parag}

\begin{remark}
\label{s-tangentrem} 
If $T=\tau\mu$ is a $k$-current with finite mass, 
then $T$ is tangent to $V$ if and only if
$\tau(x)=\pm v(x)$ for $\mu$-a.e.~$x$
(recall that $v$ is unitary).
In particular if $T$ is oriented by 
$v$ then it is also tangent to $V$, but clearly 
the converse does not hold. 
\end{remark}


	%
	%
\section{Geometric structure of the boundary}
\label{s3} 
Through this section, $v$ and $V$ 
are taken as in \S\ref{s-vectorfields}.

The next two statements are the main results in this section, 
and establish a natural (and apparently obvious)
relation between the tangent space of a current $T$ and the 
tangent space of the boundary $\bd T$, namely that, 
under suitable assumptions, the former contains the latter.

\begin{theorem}[(See~\cite{AlMas}.)]
\label{s-geopro1}
If $T$ is an integral $k$-current
oriented by $v$, then the boundary $\bd T$ 
is tangent to~$V$.
\end{theorem}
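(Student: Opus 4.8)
The plan is to localise the statement at the points of $\Span(\bd T)$ and blow up there: since $v$ is continuous, near such a point $T$ is modelled on a $k$‑current with \emph{constant} orientation, and for such currents the tangency of the boundary is a fact of $BV$ theory. Concretely, since $T$ is integral, $\bd T$ is a rectifiable $(k-1)$‑current, say $\bd T=\IC{\Gamma,\sigma,m}$, and ``$\bd T$ tangent to $V$'' means $\Tan(\Gamma,x)\subseteq V(x)$ for $\|\bd T\|$‑a.e.\ $x$. I would fix a point $x_0$ in the support of $\bd T$ which is generic for $\bd T$: the approximate tangent plane $L_0:=\Tan(\Gamma,x_0)$ exists, $m$ is approximately continuous at $x_0$, and therefore the rescalings $(\bd T)_{x_0,r}:=(\mu_{1/r}\circ\tau_{-x_0})_{\#}(\bd T)$ converge, as $r\to 0$, to $m(x_0)\IC{L_0}$; here $\tau_{-x_0}$ is translation by $-x_0$ and $\mu_{1/r}(y):=y/r$. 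Since $v$ is continuous, $V(x_0)=\Span(v(x_0))=:W_0$, so it suffices to show $L_0\subseteq W_0$.

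Next I would blow up $T$ itself at $x_0$: set $T_{x_0,r}:=(\mu_{1/r}\circ\tau_{-x_0})_{\#}T$, so that $\bd(T_{x_0,r})=(\bd T)_{x_0,r}\to m(x_0)\IC{L_0}$, which in particular has locally finite mass. Assuming the upper $k$‑density of $\|T\|$ at $x_0$ is finite — equivalently $\sup_r\Mass(T_{x_0,r}\llcorner B_R)<\infty$ for each $R$ — the compactness theorem for integral currents gives a subsequence $T_{x_0,r_j}$ converging to an integral $k$‑current $S$ with $\bd S=m(x_0)\IC{L_0}$. Moreover $T_{x_0,r}$ is oriented by $y\mapsto v(x_0+ry)$, which converges locally uniformly to the constant $v(x_0)=:v_0$; passing to the limit, $S$ is oriented by the \emph{constant} $k$‑vector $v_0$, i.e.\ $S=v_0\,\mu$ for a locally finite measure $\mu$.

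Then I would invoke the structure of normal currents with constant orientation. Choosing coordinates so that $v_0=e_1\wedge\dots\wedge e_k$ and $W_0=\R^k\times\{0\}$, the fact that $\bd S$ has locally finite mass forces the distributional derivatives $\bd_1\mu,\dots,\bd_k\mu$ to be locally finite measures; disintegrating $\mu$ along the planes parallel to $W_0$ and using the coarea formula for $BV$ functions, one gets that $\bd S$ is carried by a rectifiable set contained in a countable union of planes parallel to $W_0$, with orientation in $\Span(e_1,\dots,e_k)=W_0$ — that is, $\bd S$ is tangent to $W_0$. Applied to $S=v_0\mu$ with $\bd S=m(x_0)\IC{L_0}$ and $m(x_0)\neq 0$, this yields $L_0\subseteq W_0=V(x_0)$, as desired.

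The main obstacle is the one I glossed over: finiteness of the upper $k$‑density of $\|T\|$ at $\|\bd T\|$‑a.e.\ point of $\Span(\bd T)$, which is exactly what makes the blow‑up of $T$ (rather than just of $\bd T$, which is automatic) available. This is a purely measure‑theoretic assertion about $\|T\|=\theta\,\Haus^k\llcorner\Sigma$ with $\Sigma$ $k$‑rectifiable and $\theta\in L^1$, but it is the step that needs genuine care; the rigorous version of the third paragraph (from ``$\bd_j\mu$ are measures'' to the disintegration and the tangency of $\bd S$) is classical $BV$ theory but also has to be set up carefully. A way to reduce $k$ is to slice $T$ by generic linear functions — the slices are integral $(k-1)$‑currents oriented by the (still continuous, unit, simple) sliced $(k-1)$‑vectorfield, and the tangency of $\bd T$ can then be recovered by the coarea formula on $\Gamma$ together with the density of hyperplanes in the Grassmannian — but since slicing collapses to a vacuous statement at $k=1$, the case $k=2$ still needs the blow‑up argument above, so this only isolates the difficulty rather than removing it.
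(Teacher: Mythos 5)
The note itself does not contain a proof of Theorem~\ref{s-geopro1}: it defers to \cite{AlMas}, and the only hint given (Remark~\ref{s-geoprorem}(b)) is that the key step is a blow-up of $T$ at almost every point of the boundary, where integrality is essential. Your strategy is exactly that one. The parts you carry out are sound, and one of them can be simplified: once you have a blow-up limit $S=v_0\mu$ with constant simple orientation $v_0=e_1\wedge\dots\wedge e_k$ and $\bd S=m(x_0)\IC{L_0}$, you do not need the disintegration/coarea step of your third paragraph. Indeed $\scalar{\bd S;\omega_I\,dx_I}=\int\scalar{e_1\wedge\dots\wedge e_k;\de\omega_I\wedge dx_I}\,d\mu$ vanishes unless $I\subset\{1,\dots,k\}$, so the orienting $(k-1)$-vector of $\bd S$ lies $\|\bd S\|$-a.e.\ in $\Lambda_{k-1}W_0$; applied to $\bd S=m(x_0)\IC{L_0}$ with $m(x_0)\ge 1$ this gives $\sigma(x_0)\in\Lambda_{k-1}W_0$, hence $L_0\subseteq W_0$ directly.

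The genuine gap is the one you flag yourself, but it is more serious than you suggest. You need $\|T\|(B_r(x_0))=O(r^k)$ at $\|\bd T\|$-a.e.\ $x_0$; this is needed for the compactness step, and it is equally unavoidable if one bypasses the blow-up and integrates by parts directly against $\varphi\beta$, with $\beta$ a constant $(k-1)$-covector annihilating $\Lambda_{k-1}W_0$: continuity of $v$ only gives $|\scalar{\bd T;\varphi\beta}|\le C\,\eta(r)\,\|T\|(B_r(x_0))/r$ with $\eta$ the modulus of continuity of $v$ at $x_0$, and this must beat the order $r^{k-1}$ of the left-hand side at a generic boundary point. This is \emph{not} a ``purely measure-theoretic assertion'' about $\theta\,\Haus^k$ restricted to a rectifiable $\Sigma$: the upper $k$-density of a finite $k$-rectifiable measure is finite only off an $\Haus^k$-null set, while $\|\bd T\|$ is a $(k-1)$-dimensional measure and can perfectly well charge $\Haus^k$-null sets. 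Concretely, adding to a current a family of $k$-cycles shrinking onto a boundary point (in the model case $k=1$: a segment with endpoint at the origin plus circles of radius $j^{-2}$ centred there) yields an integral current of finite mass with infinite upper density at a point charged by the boundary. So the density bound, if true in your setting, must be extracted from the actual hypotheses --- integrality and, crucially, the orientation by a single \emph{continuous} unit simple $v$, which constrains $\Sigma$ to be tangent to $V$ and should exclude such cycle pollution --- and that is precisely where the real content of the theorem lies. As written, your argument establishes the conclusion only at those boundary points where the upper $k$-density of $\|T\|$ happens to be finite.
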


\begin{theorem}[(See~\cite{AlMasSte}.)]
\label{s-geopro2}
If $V$ is of class $C^1$ and $T$ 
is an integral $k$-current tangent to~$V$, 
then $\bd T$ is tangent to~$V$.
\end{theorem}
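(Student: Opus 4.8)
We first bring the statement to a cleaner form. Write $T=\IC{\Sigma,\tau,\theta}$; since $T$ is tangent to $V$, Remark~\ref{s-tangentrem} gives $\tau=\varepsilon\,v$ on $\Sigma$ for some Borel function $\varepsilon\colon\Sigma\to\{\pm1\}$, and (being continuous and unitary) $v$ is itself an orientation of $\Sigma$. If $\varepsilon\equiv1$ then $T$ is oriented by $v$ and the conclusion is precisely Theorem~\ref{s-geopro1}, so the only genuine obstruction is the sign $\varepsilon$. Working locally on a ball $B$ on which $v_1,\dots,v_k$ extend to a $C^1$ frame $v_1,\dots,v_n$ of $\R^n$ with $C^1$ dual coframe $v_1^\flat,\dots,v_n^\flat$, and using that $\bd T$ is rectifiable ($T$ being integral) and that $v_j^\flat$ annihilates $V=\Span v$ for $j>k$, a direct computation (the term from $v_j^\flat\wedge\de\gamma$ drops out) shows that $\bd T$ is tangent to $V$ on $B$ if and only if
\[
\int_\Sigma \varepsilon\,\theta\,\scalar{v;\,\de v_j^\flat\wedge\gamma}\,d\Haus^k=0
\quad\text{for all }j>k\text{ and all test }(k-2)\text{-forms }\gamma.
\]
Since $\de v_j^\flat(v_a,v_b)=-v_j^\flat([v_a,v_b])$, this in turn amounts to $V$ being involutive at $\Haus^k$-a.e.\ point of $\Sigma$; in other words, up to this routine reduction, Theorem~\ref{s-geopro2} is the same as Theorem~\ref{s-frobenius1} for currents tangent to $V$, and is the substantial part.

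To handle the sign, the plan is to reduce to Theorem~\ref{s-geopro1} by decomposition. Split $\Sigma=\Sigma^+\sqcup\Sigma^-$ according to $\varepsilon=\pm1$ and set $T^\pm:=\IC{\Sigma^\pm,v,\theta}$, so that $T^+$ and $T^-$ are rectifiable, have positive multiplicity, are oriented by $v$, and $\bd T=\bd T^+-\bd T^-$. If $T^+$ and $T^-$ were integral, Theorem~\ref{s-geopro1} would apply to each and give that $\bd T^+$ and $\bd T^-$ --- hence $\bd T$ --- are tangent to $V$, which is exactly the claim.

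The difficulty, and the place where work is needed, is precisely the integrality of $T^\pm$: a priori $\bd T^\pm$ need not be rectifiable, nor even of finite mass, and the problem is structural rather than technical, since $\varepsilon$ is an arbitrary Borel function and $\Sigma^\pm$ cannot in general be described through sublevel sets of Lipschitz functions --- essentially the only cutting operation under which restriction preserves integrality. I would attack this with a regularization argument based on the $C^1$ hypothesis: using the flows of the vectorfields $v_1,\dots,v_n$ one constructs, around each point of $\operatorname{supp}T$, a $C^1$ ``box'' chart $Q$; for a.e.\ such $Q$ the restriction of $T$ to $Q$ is integral (the relevant slicing terms on $\bd Q$ being rectifiable and of finite mass), and --- $v$ being of class $C^1$ --- inside a sufficiently small box one can control the geometry of $\Sigma$ and the oscillation of $\varepsilon$ well enough to write the restriction of $T$ to $Q$ as a finite sum of $v$-oriented integral currents plus a small remainder, apply Theorem~\ref{s-geopro1} to the summands, and pass to the limit using the uniform bound $\Mass(\bd T)<\infty$ together with the compactness theorem for integral currents. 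Obtaining \emph{uniform} control of the boundary masses of these pieces along the limiting procedure is the crux, and is where the $C^1$ regularity of $V$ --- as opposed to the mere continuity that suffices in Theorem~\ref{s-geopro1} --- is essential.
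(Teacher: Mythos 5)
There is a genuine gap, and it sits exactly where you flag ``the crux'' yourself. Your first paragraph is a correct and useful reduction: completing $v_1,\dots,v_k$ locally to a $C^1$ frame with dual coframe $v_1^\flat,\dots,v_n^\flat$, testing $\bd T$ against forms $f\,v_j^\flat\wedge\gamma$ with $j>k$, and using that $T$ is tangent to $V$ does show that the statement is equivalent to $\de v_j^\flat(v_a,v_b)=-v_j^\flat([v_a,v_b])$ vanishing $\Haus^k$-a.e.\ on $\Sigma$, i.e.\ to the involutivity statement of Theorem~\ref{s-frobenius1}. But that is the whole theorem, not a simplification of it, and your mechanism for proving it --- write the restriction of $T$ to a small box $Q$ as a finite sum of $v$-oriented integral currents plus a remainder of small mass, apply Theorem~\ref{s-geopro1} to each summand, and pass to the limit --- is never carried out. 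Note that once a decomposition $T\llcorner Q=\sum_i P_i+S$ with each $P_i$ integral and oriented by $v$ and $\Mass(S)<\delta$ is granted, the limit step is immediate and needs neither uniform boundary-mass bounds nor the compactness theorem: $\scalar{\bd(T\llcorner Q);f\,v_j^\flat\wedge\gamma}=\sum_i\scalar{\bd P_i;f\,v_j^\flat\wedge\gamma}+\scalar{S;\de(f\,v_j^\flat\wedge\gamma)}$, the sum vanishes by Theorem~\ref{s-geopro1} and the last term is $O(\delta)$. So the decomposition \emph{is} the entire content, and no construction of it is offered or in sight: the sign $\varepsilon$ is an arbitrary Borel function on an arbitrary rectifiable set, so there is no scale at which its oscillation is controlled; the $C^1$ regularity of $V$ constrains the frame, not $\varepsilon$ or $\Sigma$; and restriction to Borel sets destroys integrality in general, while the restrictions that preserve it (a.e.\ sublevel sets of Lipschitz functions, via slicing) do not separate $\Sigma^+$ from $\Sigma^-$.

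A second point: this note does not actually contain a proof of Theorem~\ref{s-geopro2} (it is announced from \cite{AlMasSte}), but Remark~\ref{s-geoprorem}(c) states explicitly that the proof there is \emph{completely different} from that of Theorem~\ref{s-geopro1} and covers normal currents whose boundary is singular with respect to the current --- a setting in which no decomposition into integral $v$-oriented pieces of the kind you need can exist. So the intended argument does not factor through Theorem~\ref{s-geopro1}, whereas yours must. The counterexample with $V$ merely continuous, mentioned in the same remark, confirms that wherever the $C^1$ hypothesis enters it has to do real work; in your sketch it is invoked precisely and only at the step that is missing.
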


\begin{remark}
\label{s-geoprorem}
(a)~Theorem~\ref{s-geopro2} can be viewed
as the ``non-oriented version'' of Theorem~\ref{s-geopro1}, 
and under the assumption that $V$ is of class $C^1$ 
it is actually a stronger statement 
(cf.~Remark~\ref{s-tangentrem}). 

\miniskip
(b)~Theorem~\ref{s-geopro1} can be proved in a slightly 
stronger form, and under slightly weaker assumptions
on the current $T$ (see \cite{AlMas} for more details); 
the key step of the proof consists in taking the 
blow-up of $T$ at ``almost every point of the boundary'', 
and here the assumption that $T$ is integral (or slightly less)
plays an essential role.

\miniskip
(c)~Theorem~\ref{s-geopro2} can be proved 
under much weaker assumptions on the current $T$, 
including the case where $T$ is normal 
and $\bd T$ is singular with respect to~$T$.%
\footnoteb{Here both $T$ and $\bd T$ are viewed as 
(vector-valued) measures.}
The proof is completely different from that of
Theorem~\ref{s-geopro1}, and relies heavily 
on the fact that $V$ is of class $C^1$ (see \cite{AlMasSte}).
Note that this regularity assumption on $V$ can perhaps be weakened, 
but cannot be entirely dropped: indeed in \cite{AlMas} we 
construct a continuous distribution $V$ of
$2$-planes in $\R^3$ and an integral $2$-current 
$T$ such that $T$ is tangent to $V$ but $\bd T$ is not.%
\footnoteb{This current is actually (supported
on) the graph of a continuous Sobolev function.}

\miniskip
(d)~In \cite{AlMasSte} we also show that
if $V$ is everywhere involutive 
then Theorem~\ref{s-geopro2} holds for every 
normal $k$-current $T$. This is no longer true 
if $V$ is not everywhere involutive,
the counterexample being any current on $\Omega$ 
of the form $T:=v\mu$ where $\mu:=\rho\Leb^n$
and $\rho$ is a function of class $C^1$ 
whose support is compact and contained in the 
(open) set of all points where $v$ is not involutive.
\end{remark}

The relation between the geometric property of the boundary 
of $T$ proved in Theorem~\ref{s-geopro2} and Frobenius theorem 
is made clear in the following statement.

\begin{proposition}
\label{s-basic}
Assume that $V$ is of class $C^1$,  
and let $T$ be a normal $k$-current tangent to $V$ such that
$\bd T$ is also tangent to~$V$.
Then $V$ is involutive at every point of the support of $T$.%
\footnoteb{By support of a current with finite mass $T=\tau\mu$
we mean the support of the measure $\mu$, that is, the smallest 
closed set $F$ such that $\mu(\R^n\setminus F)=0$.
If $T$ is rectifiable, that is $T=\IC{\Sigma,\tau,\theta}$, 
then the support of $T$ turns out to be the closure of the set of 
all points $x\in\Sigma$ where the $k$-dimensional density of $\Sigma$ 
is~$1$.}
\end{proposition}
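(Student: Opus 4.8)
The plan is to fix a point $x_0$ in the support of $T$ and show that $[v_i,v_j](x_0)\in V(x_0)$ for all $i,j$. The strategy is a blow-up (rescaling) argument combined with the classical fact that, for a $C^1$ distribution, involutivity is equivalent to the condition that the differential $d(v_1^\flat\wedge\dots\wedge v_k^\flat)$ — or equivalently each $dv_i^\flat$ restricted to $V$ — has no component measuring failure of the Frobenius integrability condition. More concretely, I would use the algebraic reformulation: $V$ is involutive at $x$ if and only if for every smooth $(k-1)$-form $\omega$ whose value $\omega(x)$ annihilates all $(k-1)$-vectors lying in $V(x)$, the coefficient $\scalar{v(x);d\omega(x)}$ vanishes. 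The failure of involutivity at a point is exactly detected by the nonvanishing of such a quantity, by a standard computation with the Lie-bracket formula for $d\omega$.

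Suppose for contradiction that $V$ is not involutive at some $x_0\in\operatorname{supp}T$. Then there is a smooth $(k-1)$-form $\omega_0$ with $\omega_0(x_0)$ vanishing on $(k-1)$-vectors in $V(x_0)$ but with $\scalar{v(x_0);d\omega_0(x_0)}=c\neq 0$; by continuity of $v$ and of the coefficients of $\omega_0$ and $d\omega_0$, we may shrink to a small ball $B=B(x_0,r)$ on which $\scalar{v(x);d\omega(x)}$ stays bounded away from $0$ (same sign) and on which $|\scalar{w;\omega(x)}|$ is small for every unit $(k-1)$-vector $w$ that is tangent to $V$ — here one multiplies $\omega_0$ by a cutoff. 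Now test $\bd T$ against $\omega$: since $T=\tau\mu$ is tangent to $V$, Remark~\ref{s-tangentrem} gives $\tau(x)=\pm v(x)$, so
\[
\scalar{\bd T;\omega}=\scalar{T;d\omega}=\int \scalar{\tau(x);d\omega(x)}\,d\mu(x)=\pm\int \scalar{v(x);d\omega(x)}\,d\mu(x),
\]
and choosing the orientation of $v$ consistently (or splitting $\mu$ into the two sign sets) one sees the right-hand side cannot vanish and is in fact comparable to $\mu(B)$, which is positive because $x_0\in\operatorname{supp}T$. On the other hand $\bd T$ is a finite-mass current tangent to $V$, so $\bd T=\sigma\nu$ with $\sigma(x)$ a unit $(k-1)$-vector spanning a subspace of $V(x)$ for $\nu$-a.e.\ $x$; hence $\scalar{\bd T;\omega}=\int\scalar{\sigma(x);\omega(x)}\,d\nu(x)$, and since $\omega(x)$ was arranged to nearly annihilate $(k-1)$-vectors in $V(x)$, this integral is small — smaller in absolute value than the lower bound obtained above, once $r$ is taken small enough. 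This contradiction forces involutivity at $x_0$.

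The main technical point — and the step I expect to require the most care — is the simultaneous control of two competing smallness/largeness requirements as $r\to 0$: the form $\omega$ must be made to annihilate $V(x)$ accurately enough (error $o(1)$ relative to the mass of $\bd T$ on $B$) while $\scalar{v;d\omega}$ must stay of order $1$ relative to $\mu(B)$. Because $\Mass(\bd T\restriction B)$ need not be controlled by $\mu(B)$ a priori, the naive estimate does not close; the resolution is to not fix $\omega_0$ in advance but to rescale $T$ at $x_0$. That is, one takes the blow-up limit of the rescaled currents $T_{x_0,r}$: since $\bd T$ has finite mass, $\Mass(\bd T_{x_0,r})$ on the unit ball is $o(1)$ after the natural normalization by $\mu(B(x_0,r))$ whenever $x_0$ is a point of positive lower density for $\mu$ — and one first reduces to such points, which are $\mu$-full and dense in $\operatorname{supp}T$. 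In the blow-up the distribution $V$ freezes to the constant plane $V(x_0)$ (here the $C^1$ hypothesis is used, to control the first-order correction), the limit current is tangent to that fixed plane with zero boundary, and one reads off that the first-order term of $V$ at $x_0$ — precisely the Lie brackets — must be tangential, i.e.\ $V$ is involutive at $x_0$. Passing from a dense set of points back to the whole (closed) support is immediate since involutivity is a closed condition on a $C^1$ distribution.
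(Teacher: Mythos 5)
Your first paragraph identifies the right dual object --- a $(k-1)$-form annihilating $V$ whose differential pairs nontrivially with $v$ at $x_0$ --- but you only impose the annihilation condition \emph{at the single point} $x_0$ (and approximately on a small ball). This creates the error term $\int\scalar{\sigma;\omega}\,d\nu$ which, as you yourself observe, cannot be beaten by the main term because the mass of $\bd T$ on $B(x_0,r)$ is not controlled by $\mu(B(x_0,r))$. The paper avoids this difficulty entirely: Lemma~\ref{s-basiclemma} provides a $C^1$ form $\alpha$ whose restriction to $V(x)$ vanishes \emph{for every} $x$, not just at $x_0$, while still $\scalar{v(x_0);\de\alpha(x_0)}\ne 0$ (concretely, one may take $\alpha$ to be a wedge of elements of the coframe dual to a completion of $v_1,\dots,v_k$ containing at least one covector that annihilates $V$; non-involutivity is exactly the failure of the ideal generated by the annihilator of $V$ to be closed under $\de$). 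With exact annihilation both problematic terms vanish identically, $\scalar{\bd T;\varphi\alpha}=0$ and $\scalar{T;\de\varphi\wedge\alpha}=0$, so Stokes gives $\int\scalar{\pm v;\de\alpha}\,\varphi\,d\mu=0$ for every cutoff $\varphi$, contradicting $\scalar{v;\de\alpha}\ne 0$ near $x_0$ together with $\mu(B(x_0,r))>0$. No blow-up, no density points, no error estimates.

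The blow-up you propose as a rescue does not close the argument. After rescaling, $V$ freezes to the constant plane $V(x_0)$ and (at suitable density points) the limit is a boundaryless current tangent to that fixed plane; but such currents exist in abundance regardless of whether $V$ is involutive at $x_0$ (for instance the current carried by the plane $V(x_0)$ itself), so the mere existence and structure of the limit yields no contradiction. The Lie brackets are first-order data of $V$, which is precisely what the zeroth-order blow-up discards; the sentence ``one reads off that the Lie brackets must be tangential'' is the entire content of the proposition and is asserted rather than proved --- making it rigorous would require tracking the next-order correction in the expansion of $V$, a substantially harder analysis that you do not describe. (Your closing observations --- that the good points are $\mu$-full hence dense in the support, and that involutivity is a closed condition for a $C^1$ distribution --- are correct, but they only help once the pointwise step is actually established.)
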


This result is an immediate consequence of the following
lemma:

\begin{lemma}[(See~\cite{AlMas}.)]
\label{s-basiclemma}
If $V$ is of class $C^1$ and is not involutive
at a point $x_0\in\Omega$, then there exists 
a $(k-1)$-form $\alpha$ of class $C^1$ on $\Omega$
such that
\begin{itemizeb}
\item[(i)] 
for every $x\in\R^n$ the restriction of $\alpha(x)$ 
to $V(x)$ is zero;%
\,\footnoteb{That is, $\scalar{w;\alpha(x)}=0$
for every ($k-1$)-vector $w$ 
whose span is contained in $V(x)$.}
\item[(ii)] 
$\scalar{v(x_0);\de\alpha(x_0)} \ne 0$.
\end{itemizeb}
\end{lemma}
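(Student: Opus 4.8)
The plan is to construct $\alpha$ by a local argument near $x_0$, using a coframe adapted to the distribution, and then to globalize by a cutoff function.

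After relabeling $v_1,\dots,v_k$ we may assume $[v_1,v_2](x_0)\notin V(x_0)$, which is possible precisely because $V$ is not involutive at $x_0$. On a small neighbourhood $U$ of $x_0$ I would complete $v_1,\dots,v_k$ to a frame $v_1,\dots,v_n$ of $\R^n$ of class $C^1$ on $U$ (for instance by adjoining constant vectorfields that complete $v_1(x_0),\dots,v_k(x_0)$ to a basis of $\R^n$; these still form a frame on a possibly smaller $U$, by continuity), and let $\xi^1,\dots,\xi^n$ be the dual coframe, which is again of class $C^1$. Since $\scalar{v_i;\xi^j}=\delta_{ij}$ and $V(x)=\Span\{v_1(x),\dots,v_k(x)\}$, each of the forms $\xi^{k+1},\dots,\xi^n$ restricts to zero on $V(x)$ for every $x\in U$, and $V(x)$ is exactly the common kernel of $\xi^{k+1}(x),\dots,\xi^n(x)$. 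Hence $[v_1,v_2](x_0)\notin V(x_0)$ gives an index $m\in\{k+1,\dots,n\}$ with $\scalar{[v_1,v_2](x_0);\xi^m(x_0)}\ne 0$; since the functions $\scalar{v_1;\xi^m}$ and $\scalar{v_2;\xi^m}$ vanish identically on $U$, Cartan's formula for the exterior derivative (with the sign convention of~\S\ref{s-vectorfields}) reduces to $\scalar{v_1\wedge v_2;\de\xi^m(x_0)}=-\scalar{[v_1,v_2](x_0);\xi^m(x_0)}\ne 0$.

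Now set $\omega:=\xi^m\wedge\xi^3\wedge\dots\wedge\xi^k$ on $U$ (to be read as $\omega:=\xi^m$ when $k=2$); this is a $C^1$ $(k-1)$-form, and since it has $\xi^m$ as a factor it restricts to zero on $V(x)$ for every $x\in U$. To evaluate $\scalar{v(x_0);\de\omega(x_0)}$, write $\de\omega=\de\xi^m\wedge\xi^3\wedge\dots\wedge\xi^k-\xi^m\wedge\de(\xi^3\wedge\dots\wedge\xi^k)$ and observe that any $k$-covector of the form $\xi^m\wedge\zeta$ pairs to zero with $v(x)=v_1(x)\wedge\dots\wedge v_k(x)$, because $\scalar{v_i;\xi^m}=0$ for $i\le k$; hence $\scalar{v(x);\de\omega(x)}=\scalar{v(x);(\de\xi^m\wedge\xi^3\wedge\dots\wedge\xi^k)(x)}$. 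Expanding this last pairing and using $\scalar{v_i;\xi^j}=\delta_{ij}$, the only surviving term is the one in which $\xi^3\wedge\dots\wedge\xi^k$ is evaluated on $v_3,\dots,v_k$ and $\de\xi^m$ on $v_1,v_2$, so that $\scalar{v(x_0);\de\omega(x_0)}=\pm\scalar{v_1\wedge v_2;\de\xi^m(x_0)}\ne 0$. Finally I would choose $\chi\in C^1(\Omega)$ supported in $U$ and identically $1$ on a neighbourhood of $x_0$, and set $\alpha:=\chi\,\omega$, extended by $0$ outside $U$: then $\alpha$ is a $C^1$ $(k-1)$-form on $\Omega$, it satisfies~(i) because at each point $\chi(x)\omega(x)$ is a scalar multiple of $\omega(x)$, and it satisfies~(ii) because $\de\alpha(x_0)=\chi(x_0)\,\de\omega(x_0)+\de\chi(x_0)\wedge\omega(x_0)=\de\omega(x_0)$, using $\chi(x_0)=1$ and $\de\chi(x_0)=0$.

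There is no deep obstacle here: the statement is essentially the Frobenius non-integrability condition read dually, and every step is routine. The one place requiring a little care is the pairing computation in the last paragraph — namely the vanishing of $\xi^m\wedge\de(\xi^3\wedge\dots\wedge\xi^k)$ against $v$ together with the bookkeeping that isolates the single term $\pm\scalar{v_1\wedge v_2;\de\xi^m}$ — after which conditions~(i) and~(ii) follow immediately.
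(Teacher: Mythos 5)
Your argument is correct: the paper itself gives no proof of this lemma (it is deferred to the forthcoming reference \cite{AlMas}), so there is nothing to compare line by line, but your construction is the natural dual reading of non-involutivity and every step checks out. In particular, the coframe $\xi^1,\dots,\xi^n$ is $C^1$ by matrix inversion, Cartan's formula with $\scalar{v_1;\xi^m}=\scalar{v_2;\xi^m}\equiv 0$ correctly yields $\scalar{v_1\wedge v_2;\de\xi^m(x_0)}=-\scalar{[v_1,v_2](x_0);\xi^m(x_0)}\ne 0$, the bookkeeping isolating the single surviving term in $\scalar{v;\de\xi^m\wedge\xi^3\wedge\dots\wedge\xi^k}$ is right (any other distribution of the $v_i$ forces a factor $\scalar{v_i;\xi^j}=0$ with $i\ne j$), and the cutoff preserves both (i) and (ii) since $\de\chi(x_0)=0$.
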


\begin{proof}[Proof of Proposition~\ref{s-basic}]
We write $T=\tau\mu$, 
and we assume by contradiction that there
exists a point $x_0$ in the support of $\mu$
where $v$ is not involutive.


We take $\alpha$ as in Lemma~\ref{s-basiclemma}.
Then for every smooth function $\varphi$ with compact 
support on $\Omega$ there holds
\begin{align*}
  0 
  = \scalar{\bd T;\varphi\alpha}
  = \scalar{T;\de(\varphi\alpha)}
& = \scalar{T;\de\varphi\wedge\alpha} + \scalar{T;\varphi\,\de\alpha} \\
& = \scalar{T;\varphi\,\de\alpha}
  = \int_{\Omega} \scalar{\tau;\de\alpha} \, \varphi \, d\mu
\end{align*}
(the first equality follows from the fact that $\bd T$ is tangent 
to $V$ and property~(i) in Lemma~\ref{s-basiclemma};
the third one from the identity 
$\de(\varphi\alpha) = \de\varphi\wedge\alpha+\varphi\,\de\alpha$;
the fourth one by the fact that $T$ is tangent to $V$ and
the restriction of the $k$-form $\de\varphi\wedge\alpha$ 
to $V$ is null, again by property~(i) in Lemma~\ref{s-basiclemma}).

Since $\varphi$ is arbitrary 
we infer that $\scalar{\tau;\de\alpha}=0$ $\mu$-a.e., and since 
$\tau=\pm v$ (because $T$ is tangent to $V$, 
cf.~Remark~\ref{s-tangentrem}) we obtain 
that $\scalar{v;\de\alpha}=0$ $\mu$-a.e.

On the other hand, property~(ii) in Lemma~\ref{s-basiclemma}  
implies that $\scalar{v;\de\alpha} \ne 0$ in a neighbourhood 
of $x_0$. 
Since $x_0$ is in the support of $\mu$,
this neighbourhood has positive $\mu$ measure, and
we have a contradiction.
\end{proof}

Using Theorem~\ref{s-geopro2} and Proposition~\ref{s-basic}
we immediately obtain the following:

\begin{theorem}
\label{s-frobenius1}
Assume that $V$ is of class $C^1$ and that $T$ is an
integral $k$-current tangent to $V$. Then 
$V$ is involutive at every point in the support of $T$.
\end{theorem}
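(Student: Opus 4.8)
The plan is to obtain Theorem~\ref{s-frobenius1} by combining the two results that immediately precede it. Suppose $V$ is of class $C^1$ and $T$ is an integral $k$-current tangent to $V$. First I would invoke Theorem~\ref{s-geopro2}: since $V$ is of class $C^1$ and $T$ is integral and tangent to $V$, it gives that the boundary $\bd T$ is tangent to $V$ as well. Next I would observe that $T$, being integral, is in particular normal (cf.~\S\ref{s-intcurrents}), so we are in the situation of Proposition~\ref{s-basic}: $V$ is of class $C^1$, $T$ is a normal $k$-current tangent to $V$, and $\bd T$ is tangent to $V$. Applying Proposition~\ref{s-basic} then yields that $V$ is involutive at every point of the support of $T$, which is exactly the claim.

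There is essentially nothing left to prove beyond chaining these two facts; the only point worth a word is that the notion of ``tangent to $V$'' used in the hypothesis of Proposition~\ref{s-basic} for the normal current $T$ is the one from \S\ref{s-tangent}, and it coincides with the rectifiable notion for integral currents (an integral current $T=\IC{\Sigma,\tau,\theta}$ is tangent to $V$ exactly when $\Sigma$ is, which by Remark~\ref{s-tangentrem} is the same as $\tau=\pm v$ $\mu$-a.e.), so the hypothesis transfers without any adjustment. Likewise the notion of ``support of $T$'' is consistent between the two statements.

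The genuine content of the argument is of course hidden inside Theorem~\ref{s-geopro2} and Lemma~\ref{s-basiclemma}, which we are entitled to assume here; the main obstacle in the whole development is the proof of Theorem~\ref{s-geopro2} (carried out in \cite{AlMasSte}), which relies on the $C^1$ regularity of $V$ and is quite different in spirit from the blow-up argument behind Theorem~\ref{s-geopro1}. But for the present statement the proof is simply: apply Theorem~\ref{s-geopro2}, then apply Proposition~\ref{s-basic}.
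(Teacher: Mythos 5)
Your proposal is correct and matches the paper's argument exactly: the paper derives Theorem~\ref{s-frobenius1} by noting it follows immediately from Theorem~\ref{s-geopro2} combined with Proposition~\ref{s-basic}, which is precisely your chain (boundary tangency from Theorem~\ref{s-geopro2}, then involutivity from Proposition~\ref{s-basic}, using that integral currents are normal). Nothing to add.
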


\begin{remark}
\label{s-frobenius1rem}
(a)~The analogue of Theorem~\ref{s-frobenius1} 
for rectifiable sets does not hold. 
Indeed in \cite{AlMas}
we show that for every distribution $V$, even 
a nowhere involutive one, 
it is possible to find a $k$-dimensional surface $S$
of class $C^1$ whose tangency set
\[
\Sigma:=
\big\{ 
  x\in S \, : \, \Tan(S,x)=V(x)
\big\}
\]
has positive $\Haus^k$-measure;
in particular $\Sigma$ is a non-trivial
$k$-rectifiable set tangent 
to~$V$. (This result was first proved 
in a slightly less general form in 
\cite{Balogh}, Theorem~1.4.) 

\miniskip
(b)~Using Theorem~\ref{s-frobenius1} we can partly recover
(and even extend) the Frobenius theorem for 
Sobolev surfaces proved in \cite{MaMaMo}, Theorem~1.2.
To be precise, by Sobolev surface  
we mean a $k$-rectifiable set $\Sigma$ of the form 
$\Sigma=f(U)$ where $U$ is an open set in $\R^k$ 
and $f:U\to\Omega$ is a continuous map of class 
$W^{1,p}$ with $p>k$, and we can show the following
(see~\cite{AlMas}): if $V$ is of class $C^1$ and 
$\Sigma$ is a Sobolev surface tangent to $V$, then 
$V$ is involutive at $\Haus^k$-a.e.~point of $\Sigma$.
\end{remark}

\section{Foliations of normal currents}
\label{s4}
We begin this section by giving the definition 
or foliation of a current, and then we show
that for a normal current which is tangent 
to a distribution of planes $V$ of class $C^1$ 
the existence of a foliation is strictly related to the 
involutivity of $V$ (Theorem~\ref{s-frobenius2}).

\begin{parag}[Foliations of currents]
\label{s-foliation} 
Let $T=\tau\mu$ be a $k$-current with finite mass in $\Omega$, 
and let $\{R_t\}$ be a family of rectifiable $k$-currents
in $\Omega$, where $t$ varies in some index space $I$ endowed 
with a measure $dt$.%
\footnoteb{We also assume that the function
$t\mapsto\Mass(R_t)$ and $t\mapsto\scalar{R_t;\omega}$
are Borel measurable for every $k$-form $\omega$
on $\Omega$ of class $C^\infty_c$ 
(or, equivalently, of class $C_0$).}
We say that $\{R_t\}$ is a \emph{mass decomposition}, 
or \emph{foliation}, of $T$ if 
\begin{itemizeb}
\item[(i)]
$\scalar{T;\omega} = \int_I \scalar{R_t;\omega} \, dt$
for every $k$-form $\omega$ on $\Omega$ of class $C^\infty_c$;
\item[(ii)]
$\Mass(T) = \int_I \Mass(R_t) \, dt$.
\end{itemizeb}
If $T$ is a normal current, we may also consider 
the following additional conditions:
\begin{itemizeb}
\item[(iii)]
$\int_I \Mass(\bd R_t) \, dt <+\infty$;
\item[(iv)]
$\Mass(\bd T) = \int_I \Mass(\bd R_t) \, dt$.
\end{itemizeb}
\end{parag}

\begin{remark}
\label{s-foliationrem}
(a)~Condition~(i) is often written in compact form:
$T = \int_I R_t \, dt$.

\miniskip
(b)~If $T$ is oriented by a continuous $k$-vectorfield $v$, 
then condition~(ii) implies that $R_t$ is oriented 
by $v$ for a.e.~$t\in I$.%
\footnoteb{Conversely, if $\int_I\Mass(R_t) \, dt$ is finite, 
(i)~holds, and $R_t$ is oriented by $v$ for a.e.~$t$, 
then (ii)~holds.}
This explains the term ``foliation''.

\miniskip
(c)~Conditions~(i) and (iii) imply that
$\bd T=\int_I \bd R_t \, dt$.
Condition~(iv) is stronger than (iii), and implies that the family
$\{\bd R_t\}$ is a foliation of $\bd T$.

\miniskip
(d)~A current of finite mass $T=\tau\mu$ may admit no foliation. 
For example this happens if $\mu$ is a Dirac mass, 
or more generally a measure supported on a set $E$ 
which is purely $k$-unrectifiable.%
\footnoteb{That is, $\Haus^k(E\cap\Sigma)=0$ for every $k$-rectifiable
set $\Sigma$.}
Or if $\mu$ is the restriction of $\Haus^k$ to a $k$-surface 
$S$ but $\tau$ does not span the tangent bundle of~$S$.%
\footnoteb{The point is that for currents with finite mass 
the measure $\mu$ can be chosen independently of 
the orientation~$\tau$.
This is not the case with normal currents, and indeed
none of these examples is a normal current.}
\end{remark}

While the question of the existence of foliations for 
currents with finite mass is not particularly interesting, 
the same question for normal currents is quite relevant, 
and was first formulated by Frank Morgan
(see \cite{GMT}, Problem~3.8).
The next result answers this question 
for normal currents which are tangent to a distribution 
of planes of class $C^1$.
If no regularity assumption is made on the tangent bundle 
of the currents there are a few partial results 
(see Remark~\ref{s-frobenius2rem}) and  
the question is not completely settled.

\begin{theorem}[(See~\cite{AlMas}.)]
\label{s-frobenius2}
Let $V$ be a distribution of $k$-planes of class $C^1$ on $\Omega$.

\miniskip 
\emph{(i)}~If $V$ is everywhere involutive, then every point of $\Omega$ 
admits a neighbourhood $U$ such that every normal $k$-current 
in $U$ tangent to $V$ admits a foliation $\{R_t\}$
satisfying conditions~(i), (ii), (iv) in \S\ref{s-foliation}.

\miniskip
\emph{(ii)}~Conversely, if $T$ is a normal $k$-current in $\Omega$ 
which is tangent to $V$ and admits a foliation $\{R_t\}$ 
satisfying conditions~(i), (ii) in \S\ref{s-foliation} 
and such that the currents $R_t$ are integral,
then $V$ is involutive at every point in the support of $T$.
\end{theorem}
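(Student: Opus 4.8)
I would prove the two implications separately: part~(i) by an explicit construction, part~(ii) by reduction to Theorem~\ref{s-frobenius1}.

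For part~(i) the plan is to pass to Frobenius coordinates and then decompose by the coarea formula. Fix $x_0\in\Omega$. Since $V$ is involutive and of class $C^1$, the classical Frobenius theorem gives a neighbourhood $U$ of $x_0$ and a $C^1$ diffeomorphism $\Phi\colon U\to Q'\times Q''$, with $Q'\subset\R^k$ and $Q''\subset\R^{n-k}$ open cubes, such that $D\Phi$ carries $V(x)$ onto the horizontal plane $H:=\Span(e_1,\dots,e_k)$ for every $x\in U$; in particular the plaques $\Phi^{-1}(Q'\times\{y''\})$ are integral manifolds of $V$. Given a normal $k$-current $T$ in $U$ tangent to $V$, its push-forward $S:=\Phi_\#T$ is a normal current in $Q'\times Q''$ tangent to $H$, hence, by Remark~\ref{s-tangentrem}, of the form $S=\bar e\,\lambda$ with $\bar e:=e_1\wedge\dots\wedge e_k$ and $\lambda$ a finite signed measure. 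The first key step is to read off the structure of such an $S$: computing $\scalar{\bd S;\omega}=\scalar{S;\de\omega}$ one finds that $\bd S$ has finite mass exactly when the distributional derivatives $\partial_{x^j}\lambda$, $j=1,\dots,k$, are finite measures, and that $\Mass(\bd S)$ equals the total variation of the $\R^k$-valued measure $(\partial_{x^1}\lambda,\dots,\partial_{x^k}\lambda)$. Disintegrating $\lambda=\int_{Q''}\lambda_{y''}\,d\nu(y'')$ along the projection onto $Q''$, the fact that the \emph{horizontal} derivatives of $\lambda$ are measures forces, for $\nu$-a.e.\ $y''$, the slice $\lambda_{y''}$ to be of the form $f_{y''}\Leb^k$ with $f_{y''}\in BV(Q')$; since slices over distinct $y''$ are mutually singular, this also yields $\Mass(S)=\int_{Q''}\|f_{y''}\|_{L^1}\,d\nu$ and $\Mass(\bd S)=\int_{Q''}\|Df_{y''}\|(Q')\,d\nu$.

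The second key step is to decompose each plaque-slice by the signed layer-cake formula. For $\nu$-a.e.\ $y''$ and a.e.\ $s\in\R$ the level set $E_{y'',s}$ of $f_{y''}$ at height $s$ (the super-level set $\{f_{y''}>s\}$ for $s\ge 0$, the sub-level set $\{f_{y''}\le s\}$ for $s<0$) has finite perimeter in $Q'$, and $f_{y''}$ is the corresponding signed superposition of the indicators $\mathbf 1_{E_{y'',s}}$. I would take as index space $I:=Q''\times\R$ with the product measure $d\nu(y'')\,ds$, and set $S_{(y'',s)}:=\pm\,\bar e\,\mathbf 1_{E_{y'',s}}\Leb^k$ (sign of $s$); each $S_{(y'',s)}$ is an integral current supported on a single plaque, so its boundary is rectifiable and automatically tangent to $H$. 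Conditions~(i) and~(ii) of \S\ref{s-foliation} for $S$ follow from the layer-cake identity together with the mutual singularity of the plaques, and condition~(iv) follows from the coarea formula for $BV$ functions, $\|Df_{y''}\|(Q')=\int_\R\mathrm{Per}(\{f_{y''}>s\};Q')\,ds$, which is an \emph{equality} (this is where ``normal'' is genuinely used, and what distinguishes~(iv) from the weaker~(iii)). Finally I would pull everything back by $\Phi^{-1}$: $R_{(y'',s)}:=\Phi^{-1}_\#S_{(y'',s)}$ is an integral $k$-current tangent to $V$, push-forward commutes with the integral in~(i), and the identities in~(ii) and~(iv) survive the change of variables because the coarea computations above apply verbatim with the continuous positive Jacobian factor of $\Phi^{-1}$ along the horizontal plane inserted as a weight; condition~(iii) is then implied by~(iv). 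The measurability requirements in \S\ref{s-foliation} are met by standard measurable-selection facts for disintegrations and for level-set slicing.

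For part~(ii) the point to exploit is that conditions~(i) and~(ii) already force each $R_t$ to be tangent to $V$ (this is not hypothesised in the statement, but follows). Writing $T=\tau\mu$ and $R_t=\tau_t\mu_t$ as ($k$-vector-valued) measures, condition~(i) extends to bounded Borel test forms and gives $|T|(A)\le\int_I|R_t|(A)\,dt$ for every Borel $A$, so the measure $A\mapsto\int_I\mu_t(A)\,dt$ dominates $\mu=\|T\|$; by~(ii) the two have the same total mass, whence $\mu=\int_I\mu_t\,dt$. A standard disintegration argument then produces, for $\mu$-a.e.\ $x$, a probability measure $p_x$ on $I$ with $\tau(x)=\int_I\tau_t(x)\,dp_x(t)$, and since $\tau(x)$ and the $\tau_t(x)$ are \emph{simple} unit $k$-vectors the Cauchy--Schwarz chain $1=|\tau(x)|^2=\int_I\scalar{\tau(x);\tau_t(x)}\,dp_x(t)\le 1$ forces $\tau_t(x)=\tau(x)$ for $p_x$-a.e.\ $t$; hence for a.e.\ $t$ we have $\tau_t=\tau=\pm v$ at $\mu_t$-a.e.\ point, i.e.\ $R_t$ is tangent to $V$. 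By Theorem~\ref{s-frobenius1}, $V$ is then involutive at every point of $\mathrm{supp}\,R_t$, for a.e.\ $t$. Since $\mu=\int_I\mu_t\,dt$, the support of $T$ is contained in the closure of $\bigcup_t\mathrm{supp}\,R_t$; and since the set of points where the continuous distribution $V$ is involutive is closed — being the common zero set of the continuous maps $x\mapsto[v_i,v_j](x)\wedge v(x)$ — we conclude that $V$ is involutive at every point of $\mathrm{supp}\,T$.

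The step I expect to be the main obstacle is the structural one in part~(i): proving that a normal current tangent to the horizontal distribution is, plaque by plaque, the current of a $BV$ function, with the masses of $T$ and of $\bd T$ split correctly by the disintegration. The delicate point is that finiteness of the total variation of the \emph{horizontal} distributional derivatives of $\lambda$ must be transferred to the individual slices $\lambda_{y''}$, which calls for a careful disintegration-of-total-variation estimate (after which a routine mollification-plus-Rellich argument in $\R^k$ supplies the $BV$ density). Part~(ii), by contrast, is short once one notices that the tangency of the pieces $R_t$ is automatic; the rest is the reduction to Theorem~\ref{s-frobenius1} plus the elementary remark that the involutivity locus of a continuous distribution is closed.
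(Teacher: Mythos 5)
Your proposal is correct in both parts, but note that this announcement only contains a proof of part~(ii) --- Remark~\ref{s-frobenius2rem}(a) dismisses it as an immediate consequence of Theorem~\ref{s-frobenius1} --- while part~(i) is deferred entirely to \cite{AlMas}, so there is no argument in the paper against which to compare your construction. For part~(ii) you follow the paper's route but supply the two steps that make the word ``immediate'' honest: conditions~(i)--(ii) of \S\ref{s-foliation} force $\mu=\int_I\mu_t\,dt$ and then, by the strict-convexity/Cauchy--Schwarz argument, $\tau_t=\tau=\pm v$ $\mu_t$-a.e.\ for a.e.\ $t$, so that Theorem~\ref{s-frobenius1} applies to each $R_t$; and the involutivity locus of a $C^1$ distribution is closed (zero set of the continuous maps $x\mapsto[v_i,v_j](x)\wedge v(x)$), which passes the conclusion from $\bigcup_t\mathrm{supp}\,R_t$ to $\mathrm{supp}\,T$. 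Both steps are genuinely needed, since the statement does not hypothesise that the $R_t$ are tangent to $V$, and both are right. For part~(i), your scheme (Frobenius straightening, the structural lemma that a normal current oriented by the constant horizontal $k$-vector disintegrates over the vertical coordinates into $BV$ densities, then the signed layer-cake formula plus the $BV$ coarea formula) is sound, and you correctly isolate the structural lemma as the crux; it can be carried out by disintegrating $\lambda$ and the vector measure $(\partial_{x^1}\lambda,\dots,\partial_{x^k}\lambda)$ over a common dominating measure on $Q''$ and identifying the slices of the latter with the derivatives of the slices of the former via product test functions. One point deserves more than the passing mention you give it: conditions~(ii) and~(iv) are equalities of \emph{total} masses, and such scalar identities do not automatically survive a push-forward that distorts mass by a nonconstant Jacobian. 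They do survive here because your construction actually yields the stronger measure-level identities $\|S\|=\int_I\|S_t\|\,dt$ (pointwise layer-cake, the positive and negative level sets being disjoint) and $\|\bd S\|=\int_I\|\bd S_t\|\,dt$ with a.e.\ coincident orientations (localized coarea formula plus mutual singularity across plaques); you should state these explicitly, since it is exactly this measure-level form, and not the scalar equalities, that commutes with inserting the Jacobian weight of $\Phi^{-1}$.
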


\begin{remark}
\label{s-frobenius2rem}
(a)~Statement~(ii) is an immediate consequence of Theorem~\ref{s-frobenius1}. 

\miniskip
(b)~Statement~(ii) shows that the answer to Morgan's 
question is negative whenever $1<k<n$, 
at least if we require that the 
currents in the foliation are integral 
(and not just rectifiable); the example is given by
any normal current $T$ tangent to distribution 
of $k$-planes which is nowhere involutive, 
for example the normal $k$-current $T$ given in 
Remark~\ref{s-geoprorem}(d).

\miniskip
(c)~A negative answer to Morgan's question 
was first given by M.~Zworski, 
who proposed the following variant of 
statement~(ii) above (see \cite{Zworski}, Theorem~2):
if $v$ is a nowhere involutive $k$-vectorfield
and $T$ is a current of the form $T=v\Leb^n$, 
then $T$ admits no foliation. 
However this statement is not correct, because
it does not require that the currents in the foliation 
are integral, and for $k=n-1$ it contradicts the fact
that every normal current admits a foliation, 
see remark~(e) below.

\miniskip
(d)~Every normal $1$-current in $\Omega$ admits a foliation 
satisfying conditions~(i), (ii), (iii) in \S\ref{s-foliation};
this is essentially a consequence of the decomposition 
result by S.~Smirnov \cite{Smirnov} (see also \cite{PaoSte}), 
even though it is not explicitly stated there.
This result does not hold if we 
require that condition~(iv) holds. 

\miniskip
(e)~Consider a normal $(n-1)$-current $T$ in $\Omega$.
A consequence of the coarea formula for $BV$ 
functions is that if $T$ is a boundary then it 
admits a foliation satisfying 
conditions~(i), (ii), (iv) in \S\ref{s-foliation}
(see \cite{Federer}, Theorem~4.5.9(13)).
Such a foliation exists also if the boundary of $T$ 
is rectifiable, as proved in \cite{Zworski}, Theorem~1, using 
an idea from \cite{HP}. 
By modifying the argument in \cite{HP}
we prove in \cite{AlMas} that that every normal 
$(n-1)$-current admits a foliation.%
\footnoteb{In this case the currents
in the foliation are no better than rectifiable.}

\miniskip
(f)~The existence of foliations for normal currents
of dimension $d=1$ or $d=n-1$ mentioned in items~(d) and (e) 
above has no counterpart for $2\le d\le n-2$.
Indeed, for any $n\ge 4$, Andrea Schioppa constructed in 
\cite{Schioppa} a normal current $T$ of codimension $2$ 
in $\R^n$ whose support is purely $2$-unrectifiable. 
Clearly such $T$ admits no foliation, and more precisely it
cannot even be decomposed as $T=\int_I R_t \, dt$
with the only assumption that $\int_I \Mass(R_t)\, dt$
is finite.

\end{remark}

\section*{Acknowledgements}
Part of this research 
was carried out while the second author was visiting
the Mathematics Department in Pisa, 
supported by the University of Pisa through the 2015 PRA Grant
``Variational methods for geometric problems''.   
The research of the first author has been partially supported
by the Italian Ministry of Education, University and Research (MIUR) 
through the 2011 PRIN Grant ``Calculus of variations'', and by
the European Research Council (ERC) through the 2011 Advanced Grant 
``Local structure of sets, measures and currents''.
The research of the second author has been partially supported
by the European Research Council through the 2012 Starting Grant 
``Regularity theory for area minimizing currents''. 

	%
	%
	%
	%
\bibliographystyle{plain}

	%
	%
	%
	%
\vskip .5 cm

{\parindent = 0 pt\footnotesize
G.A.
\\
Dipartimento di Matematica, 
Universit\`a di Pisa
\\
largo Pontecorvo~5, 
56127 Pisa, 
Italy 
\\
\texttt{giovanni.alberti@unipi.it}

\bigskip
A.M.
\\
Institut f\"ur Mathematik,
Universit\"at Z\"urich
\\
Winterthurerstrasse 190,
8057 Z\"urich,
Switzerland
\\
\texttt{annalisa.massaccesi@math.uzh.ch}

}

\end{document}